\documentclass[11pt, twocolumn]{article}
\usepackage{article}
\author{Torgeir Aamb\o \\ \textit{Norwegian Defence Research Establishment (FFI)}}
\title{A categorical formalization of epistemic uncertainty frameworks}
\date{}

\begin{document}
\maketitle

\begin{abstract}
    \textbf{Epistemic uncertainty arises in lack of complete knowledge about the state of a system. There are multiple mathematical frameworks for measuring such uncertainty quantitatively, often referred to as \emph{imprecise probability theories}. Inspired by work of Opdan, we introduce a general category theoretic definition of epistemic calculi, which we use as a foundation for modelling and studying contradictions and synergies between several philosophical epistemological concepts. We further develop an enriched category theoretic process for changing calculi, and use this to study relationships between existing examples, like possibility theory and certainty factors. Finally, we introduce a general categorical form of belief updating based on change of enrichment, and prove that Bayesian updating and possibilistic conditioning arise as examples.} 
\end{abstract}

\section{Introduction}

Any real world measurement contains some level of uncertainty, and it is often crucial to understand exactly how uncertain some measurement or statement is. Uncertainty can be either \emph{aleatoric} or \emph{epistemic}. The former relates to inherent randomness in systems, while the latter arises due to a lack of complete knowledge about the system in question -- it is this latter notion of uncertainty that is the focus of this paper. 

Our goal is to isolate the mathematical syntax of uncertainty away from its semantics, and define what we call \emph{uncertainty calculi} in the abstract. Inspired by \cite{opdan_2025}, where Opdan develops a categorical foundation for \emph{certainty factors}, as well as general $t$-norms in fuzzy logic \cite{zadeh_1965, klement-mesiar-pap_2000}, we base our theory on symmetric monoidal posetal categories, allowing us to more precisely study the algebraic and order-theoretic properties such uncertainty calculi exhibit. Together with certainty factors we formalize possibility theory, bipolar possibility theory and interval probabilities in this framework. 

Epistemology, the field concerned with the nature of knowledge, contains a plethora of philosophical differences of opinion, as well as a wide scape of theories. Using our formalization we model some of these -- \emph{closure}, \emph{strong conservativity}, \emph{fallibility}, \emph{cancellativity}, \emph{optimism}, \emph{scepticism} and \emph{idempotency} -- which allows us to both axiomatically compare different epistemic calculi on their philosophical positions and attitudes, as well as study interactions between these philosophical positions. 

\begin{introthm}
    If $\V$ is an uncertainty calculus, then $\V$ cannot be closed, conservative and cancellative simultaneously. Furthermore, if $\V$ is complete, then being closed is equivalent to being fallible. 
\end{introthm}

After considering these axiomatic properties of uncertainty calculi in isolation, we turn our attention to the relational aspect between different calculi. Using lax and op-lax monoidal functors $F\: \V \to \W$ we model \emph{conservative} and \emph{liberal} changes of calculi, that respectively cannot create or lose certainty via fusion. Using these we compare the four examples, proving for example that bipolar possibility theory and interval probability theory are equivalent as epistemic calculi. 

Having developed some understanding of the syntactic level, we use enriched category theory construct a general method for implementing epistemic calculi into systems where one wants to measure uncertainty -- the semantic level. This enrichment methodology is similar to \cite{bradley-terilla-vlassopoulos_2022}, where the authors develop an enriched category theoretic foundation for Large Language Models (LLM). This allows us to apply methods from \cite{kelly_1982} to model \emph{change of calculi} functors, which keeps the semantic layer constant and only changes the syntactic layer of uncertainty quantification. 

Bayesian updating, due to its non-symmetric nature, is not covered by the above theory. To fix this we formalize a general form of \emph{updating} in an epistemic calculus $\V$, and prove that these generalize familiar concepts. 

\begin{introthm}
    Let $\H$ be a hypotheses category enriched in an epistemic calculus $\V$. If $\V$ is the standard poset on $(0,\infty)$ together with multiplication, then $\V$-updating is the odds-form of Bayesian updating. If $\V$ is possibility theory, then $\V$-updating recovers the possibilistic conditioning of Dubois--Prade \cite{dubois-prade_1990}.
\end{introthm}

It is worth pointing out that our approach to studying the calculus of uncertainty stands heavily on the shoulders of giants. Our goal is not a final theory, but simply to contribute with a new perspective on already existing ideas, and to connect philosophical and mathematical ideas to categorical notions.

\section{Category theory}
Categories were introduced by Eilenberg--MacLane in \cite{eilenberg-maclane_1945} -- a standard reference being \cite{maclane_1998} -- and serves as a highly general and versatile structure for studying relational concepts throughout mathematics as a whole. A \emph{category} $\C$ consists of a class of \emph{objects} $\Ob(\C)$, and a class of \emph{morphisms} $\Mor(\C)$ between the objects. Every object has an identity morphism, and composition of morphism satisfies natural associativity and identity conditions. A classical example is the category of sets, which consists of all sets and their maps. 

A \emph{functor} $F$ between categories $\C$ and $\D$ assigns to any object in $\C$ an object in $\D$, and to any morphism in $\C$ a morphism in $\D$, such that the identity morphism and composition is preserved. 

We can also equip categories with extra structure. For example, a \emph{symmetric monoidal category} is a category $\C$ together with a binary operation 
\[-\otimes_\C - \: \C \times \C \to \C\] 
and a unit object $\1_\C$, satisfying natural unitality, associativity and symmetry conditions. We will often write this as a tuple $(\C, \otimes_\C, \1_\C)$. 

A functor $F$ between two symmetric monoidal categories $(\C, \otimes_\C, \1_\C)$ and $(\D, \otimes_\D, \1_\D)$ is said to be a \emph{symmetric monoidal functor} if there is a natural equivalence $F(x\otimes_\C y) \cong F(x)\otimes_\D F(y)$ for all objects $x,y \in \C$.

\subsection{Opdans CF category}
Opdan defines in \cite{opdan_2025} a category whose objects are real numbers in the interval $(-1,1)$, and whose morphisms are determined by the natural total order $\leq$. More precisely, for any two objects $x$ and $y$, i.e. two numbers in the interval $(-1,1)$, we have a single morphism $x\to y$ if $x\leq y$, and no morphisms otherwise. 

The interpretation is that an object in this category represents the belief strength in a statement; $0$ represents total epistemic uncertainty, $1$ represents true belief, and $-1$ represents true disbelief.  

This is given a symmetric monoidal structure via the hyperbolic sum. 
\[x\otimes y = \frac{x+y}{1+xy},\]
which makes it possible to combine, or fuse, beliefs. 

\begin{remark}
    The hyperbolic sum, also called the Einstein $t$-conorm, is used in physics to add relativistic velocities, see for example \cite{parker-jeynes-walker_2025}.
\end{remark}

Opdan writes the operation as
\[x\otimes y = \phi^{-1}(\phi(x)\phi(y)),\] 
where $\phi$ is the bijection $(-1,1) \to (0,\infty)$ given by $\phi(x)=\frac{1+x}{1-x}$. This makes it immediately obvious that the symmetric monoidal structure is both commutative and associative, due to the features of multiplication of real numbers. Furthermore, the monoidal unit is $0$, which makes sense intuitively, as adding information with total epistemic uncertainty should not change the belief value. 

Notice that if $x\leq x'$ and $y\leq y'$, then $x\otimes y \leq x'\otimes y'$, as can be seen by the monotonicity of $\phi$ and real number multiplication. This makes this category a \emph{symmetric monoidal poset} in the sense of \cite{fong-spivak_2019}.

Notice also that we can revoke Cromwell's rule -- originally stated by Lindley in \cite{lindley_1991} -- in this category by adding the extremes $-1$ and $1$, giving us the full closed interval $[-1,1]$. To make this work with the monoidal structure we declare that $-1\otimes 1 = 0$. 

We define $\CF$ to be the category described above, with the added extremes $-1$ and $1$. We now use this as the template for a general definition.

\section{Epistemic calculi}
Based on the properties of $\CF$ above, we can now make a general definition. An \emph{epistemic calculus} is a (non-trivial) unital symmetric monoidal posetal category $(\V, \leq_\V, \otimes_\V, \1_\V)$. We will usually omit the indices when the category is clear. We interpret $(\V, \leq, \otimes, \1)$ by objects in $\V$ being epistemic values, morphisms being comparisons of values, and the monoidal product being fusion of values -- the monoidal unit $\1$ is interpreted as total epistemic uncertainty.

\begin{remark}
    This definition is simply an abstraction of $t$-norms on the unit interval $[0,1]$, see \cite{klement-mesiar-pap_2000}. We also here restrict ourselves to symmetric structures, in order to model evidence fusion rather than updating, which we focus on in \cref{sses:updating}. 
\end{remark}

\subsection{Examples}
Building on the ideas in the previous section we describe some known examples in our abstract framework. 





\textbf{Possibility theory} \\
Possibility theory was introduced by Zadeh in \cite{zadeh_1978}, and later expanded upon further by Dubois--Prade, see \cite{dubois-prade_1988} or the survey \cite{dubois-prade_2015}. Possibility theory can be thought of as a version of probability theory where one replaces the standard ring structure on $\R$ with its tropical max-plus or min-plus semi-ring structure -- see \cite{green_1979} for an early description, and \cite{pin_1998} for a modernized approach. 

Possibility theory assigns a value $P(H)\in [0,1]$ to a given hypothesis $H$, which measures how possible the hypothesis is. A value of $0$ means that the hypothesis is not possible, while a value of $1$ means that it is fully possible. Fusion is given by $P(H_1\cup H_2)=\min(P(H_1), P(H_2))$. A second parameter, called the \emph{necessity}, defined by $N(H)=1-P(\neg H)$, measures the extent of how necessary the hypothesis is. 

Stripping away the connection to hypotheses and semantics, we define the category $\PT$ to be the standard preorder category on the unit interval $[0,1]$. We give this category the monoidal structure defined by 
\[x \otimes y = \min(x,y).\]
The monoidal unit is then $1$, which correctly equates complete epistemic uncertainty with pure possibility.  

Readers familiar with possibility theory might wonder where the necessity values enter the picture. However, as necessity requires the negation operator $\neg$ on hypotheses, it is necessarily a semantic operation, and thus not a part of the syntactic structure presented here. 

One way to include a type of necessity into the syntactic structure is by using bipolar possibility theory.

\textbf{Bipolar possibility theory} \\
Bipolar possibility theory, see \cite{benferhat-dubois-kaci-prade_2006} and \cite{dubois-lorini-prade_2013}, is an extension of possibility theory. It introduces a second set of operators, meaning it has four operators: $P$, $N$, $\Delta$ and $\nabla$. These are respectively called \emph{strong possibility}, \emph{weak necessity}, \emph{weak possibility} and \emph{strong necessity}. We focus here only on the strong ones. 

We define $\PTb$ to be the category with objects $\{(x, x')\in I\times I \mid x \leq x'\}$. This is interpreted as $x$ measuring the degree of rejection, or \emph{guaranteed impossibility} of a hypothesis and $x'$ measuring the degree of \emph{potential possibility}. The morphisms are determined by the preorder given by 
\[(x, x') \leq (y, y') \text{ if } x \geq y \text{ and } x'\leq y'.\]
We equip $\PTb$ with a symmetric monoidal structure defined by 
\[(x,x')\otimes (y,y') = (\max(x,y), \min(x',y')).\]
This is associative, symmetric and has unit $(0,1)$, which is the correct epistemic interpretation -- it is not rejected and fully possible, which is another way of stating complete epistemic uncertainty. 

Bipolar possibility theory allows us to retain the feature that belief and disbelief in a hypothesis are kept separate, and not treated equally -- disbelief is not simply the absence of belief, or, as popularized by Sagan, ``absence of evidence is not evidence of absence'' \cite{sagan_1997}. 

\textbf{Interval probabilities} \\
Another epistemic calculus comes from \emph{interval-based probabilities}, which is an interpretation of uncertainty in probabilities. Here we model a very simple version -- compared to for example Walley \cite{walley_1991} -- using the intersection operator as monoidal structure. 

Let $\IP$ be the symmetric monoidal category with objects the non-empty closed sub-intervals of $I = [0,1]$. The morphisms are determined by the containment-preorder, $[x,x']\leq [y,y']$ if $[x,x'] \subseteq [y, y']$, while the symmetric monoidal structure is given by 
\[[x,x']\otimes_\IP [y,y'] = [x,x']\cap [y, y'].\]

The monoidal unit is the full interval $[0,1]$. The interpretation is that interval probabilities are uncertain probabilities, and that the \emph{true} probability is residing somewhere in the interval. Larger intervals are more likely to contain this true value, and hence $\1 = [0,1]$ simply means that we know absolutely nothing about the true probability of the hypothesis.

\begin{remark}
    A natural candidate to add here would be Dempster-Shafer theory \cite{dempster_1967, shafer_1976}. However, it is currently not clear to us how this could be done in the presented framework, as the DS-theory operates a bit differently. We plan to investigate a categorical version of DS-theory in upcoming work. 
\end{remark}

\subsection{Extensions and properties}
With these examples in mind we can start to layer on extra structure to the general definition. We do this in the form of additional axioms one can add to an epistemic calculus. Note that these  are not something that is automatic from the general definition. 

\textbf{E1:} There exists an element $\top \in \V$ such that $x\leq \top$ for all $x$. 

An epistemic category satisfying \textbf{E1} is said to be \emph{optimistic}. If no such top element exists is said to be \emph{skeptic}. These definitions emulate a blend between the philosophical positions of \emph{epistemological optimism} and \emph{radical skepticism}, see \cite{haskins_2004, neto_1997}, and Cromwell's rule \cite{lindley_1991}. 

An extension of this is the requirement that $\V$ has all joins, which is a generalization of having all supremums. 

\textbf{E2:} Given a subset of objects $S\subseteq \V$, there is a least upper bound $\bigvee S$, called the \emph{join} of $S$. 

By \cite[2.72]{fong-spivak_2019} any epistemic calculus with all joins also has all \emph{meets}, making it a \emph{symmetric monoidal lattice}. An epistemic calculus satisfying \textbf{E2} is said to be \emph{complete}. If $\V$ is complete, then it also satisfies \textbf{E1}, as $\top$ is the meet over the empty set. The category $\CF$ is easily seen to be complete. 

In $\CF$ we notice that given objects $x,y$ such that $0 \leq x\otimes_\CF y$, then at least one of $x,y$ had to be non-negative as well. This is interpreted as as the impossibility of creating certainty from sources that didn't already contain certainty.

\textbf{E3:} If $\1_\V \leq x\otimes_\V y$, then either $\1_\V \leq x$ or $\1_\V \leq y$.

Furthermore, in $\CF$, note that we have an ``inverse'' to the symmetric monoidal product, called an \emph{internal hom}. An epistemic calculus $\V$ has an internal hom if there is a natural binary operation $[-,-]$ such that for all triplets of objects $x,y,z$ we have 
\[x\otimes y \leq z \iff x\leq [y,z].\]
For $\CF$ one can easily check that hyperbolic subtraction
\[[x,y] = \frac{x-y}{1-xy}\]
satisfies this property, making it an internal hom. This makes $\CF$ a \emph{closed symmetric monoidal preorder} in the sense of \cite{bradley-terilla-vlassopoulos_2022} and \cite{fong-spivak_2019}. 

\textbf{E4:} The symmetric monoidal structure is closed. 

\begin{remark}
    It is here important to note that \emph{closed} refers to the categorical structure, and not to \emph{epistemic closure}, as in for example \cite{holliday_2015}. 
\end{remark}

Having an internal hom is equivalent to the symmetric monoidal category being enriched over itself \cite[Section 1.6]{kelly_1982}. Interpreting the monoidal structure as a generalization of a $t$-norm, the internal hom is simply the generalization of a \emph{residuum}, see \cite{klement-mesiar-pap_2000}. Possibility theory is also closed via standard Gödel implication: 
\[[x,y] = 
\begin{cases}
    1, & x \leq y \\
    y, & x > y.
\end{cases}
\]

A property not satisfied by $\CF$ is \emph{strong epistemic conservativity}. This is a philosophical position essentially stating that it's reasonable to stick with your current beliefs unless you have good reasons to change them, \cite{mccain_2008}. This essentially means that we place the burden of proof on changing beliefs rather than holding them. Given an epistemic framework $\V$, we can model a very strong version of this as follows.

\textbf{E5:} For all $x\in \V$, we have $x\leq x\otimes y$ for any $y \in \V$.

Intuitively, given an epistemic value $x$ we can never reduce our belief in $x$ by fusing it with another epistemic value $y$. This is similar to modern digital echo-chamber belief dynamics. It is worth noting that the above form of conservativism is quite strong, and that more nuanced definitions depending on ther strength of the counter-evidence is possible to define.  

\begin{remark}
    None of the examples in this paper satisfy the \textbf{E5} axiom. However, the alternative version of possibility theory using the maximum operator instead of the minimum operator, i.e. the epistemic calculus $([0,1], \leq, \max, 0)$, \emph{is} strongly conservative.
\end{remark}


Another important epistemological concept, satisfied by for example $\PT$, is that of \emph{idempotency}. This is the position that ones belief in a statement should not be affected by the same evidence from the same source multiple times. If we learn evidence for a statement by reading a book, we cannot strengthen our belief in the statement by reading the same book again.  

\textbf{E6:} For all $x\in \V$, we have $x\otimes x = x$.

\begin{remark}
    An epistemic calculus satisfying \textbf{E6} avoids issues similar to Vogel's bootstrapping argument, \cite{vogel_2008}. 
\end{remark}

A final property that we consider here is that of \emph{epistemic fallibilism}, which is the philosophical position that all ones beliefs can be updated when presented with contradictory evidence or good new arguments for the contrary -- see for example \cite{kekes_1972} or \cite{feldman_1981}. Simply stated, no belief is infallible. We can model this as follows. 

\textbf{E7:} For all $x, y\in \V$ there exists a $z\in \V$ such that $x\otimes z \leq y$. If $\V$ satisfies \textbf{E1}, then we require this for all $x\neq \top$. 

Notice that when $x\leq y$ we can choose $z = \1$, so this is really a statement about updating a belief to obtain something one believes less strongly. 

Finally we add the property of \emph{cancellativity}, which states that fusing the same evidence with two values cannot change their relative strength. 

\textbf{E8:} If $x\otimes z \leq y \otimes z$, then also $x \leq y$. 

An epistemic calculus $\V$ being cancellative means that its algebraic structure behaves like an abelian group.

\subsection{Inconsistencies}
One benefit of defining epistemic uncertainty calculi in this general fashion is that one can prove general relationships between properties, and study which philosophical positions that are incompatible. 



One no-go result is the following, stating that an epistemic calculus $\V$ satisfying \textbf{E2} cannot satisfy both \textbf{E4} and \textbf{E5} and \textbf{E8}. The idea is that closure and cancellativity force negative beliefs to exist, which is incompatible with strong conservativity. 

\begin{theorem}
    \label{thm:no-go1}
    A complete non-trivial epistemic framework $\V$ cannot be both closed, strongly epistemically conservative and cancellative. 
\end{theorem}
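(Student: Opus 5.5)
The plan is to show directly that the three axioms, together with completeness, force every pair of objects to be comparable in both directions. That makes $\V$ trivial and contradicts the non-triviality assumption. I expect closedness (\textbf{E4}) to play at most a supporting role: the argument below only uses \textbf{E2}, \textbf{E5} and \textbf{E8}.

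First, by completeness (\textbf{E2}), $\V$ has a top element $\top$, as noted after \textbf{E2} (it satisfies \textbf{E1}). Next, apply strong conservativity (\textbf{E5}) with first argument $\top$: for every $x\in\V$ we get $\top \leq \top\otimes x$. By symmetry this is $\top \le x\otimes\top$. Since $\top$ is the top element, $x\otimes \top \leq \top$ as well, so $x\otimes\top = \top$ for all $x$. In other words, $\top$ is absorbing for fusion. (As a side remark, \textbf{E5} applied to $\1$ gives $\1\le y$ for all $y$, so the unit is the bottom; this is the ``no negative beliefs'' intuition mentioned before the statement, but the absorbing top suffices.)

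Now invoke cancellativity (\textbf{E8}) with $z=\top$. For arbitrary $x,y\in\V$ we have
\[x\otimes\top = \top \leq \top = y\otimes\top,\]
so \textbf{E8} yields $x\le y$. Exchanging the roles gives $y\le x$ as well. Hence all objects are isomorphic, and since $\V$ is posetal, all objects are equal. So $\V$ is the trivial one-object calculus, contradicting non-triviality.

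The main obstacle is making sure the hypotheses are not stronger than needed, or that I am not misreading them. If ``non-trivial'' were intended only up to isomorphism in a preorder, the conclusion ``all objects isomorphic'' still contradicts it. If the intended route genuinely uses closedness, I would try an alternative argument. Since $-\otimes z$ is a left adjoint by \textbf{E4}, it preserves joins, so $\bot\otimes z=\bot$ for the bottom $\bot=\bigvee\emptyset$. Combining this with $\1\le\bot$ from \textbf{E5} and cancellativity gives the same collapse. I would present the top-element argument as the main proof and mention this one as a remark.
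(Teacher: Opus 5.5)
Your proof is correct, and it takes a genuinely different route from the paper's.

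The paper argues through the internal hom. For a strict pair $y<x$, any $z$ with $x\otimes z\le y$ would give $x\otimes z\le y\le y\otimes z$ by \textbf{E5}, hence $x\le y$ by \textbf{E8}. So the set $\{z\mid x\otimes z\le y\}$ is empty. That is impossible in a closed calculus, because $[x,y]$ always belongs to it. The paper therefore really uses \textbf{E4}, \textbf{E5} and \textbf{E8}. Completeness enters only cosmetically, to name $\bot$; a strict pair exists anyway, since \textbf{E5} makes $\1$ the bottom.

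Your argument bypasses the internal hom entirely and needs only the top element supplied by \textbf{E2}. \textbf{E5} makes $\top$ absorbing, and cancelling an absorbing element collapses the order. Taken together, the two proofs show that, given \textbf{E5} and \textbf{E8}, either completeness or closedness suffices on its own. Yours is also cleaner: it avoids the paper's case split, whose $x=y$ branch is neither needed nor fully argued.

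Your fallback remark is correct and in fact stronger than you state. Since $\1$ is the bottom and $-\otimes z$ preserves the empty join, $z\cong\1\otimes z\cong\bot\otimes z\cong\bot$ for every $z$. So closedness and strong conservativity alone already force triviality, with no appeal to cancellativity.
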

\begin{proof}
    Assume for the sake of contradiction that $\V$ has all three properties, and choose objects $x,y$ such that $y \leq x$. The inner hom $[x,y]$ is the least upper bound of the set $\{z\in \V \mid x\otimes z \leq y\}$, as can be easily seen by its defining property. If $z$ is in this set, then as $\V$ is strongly conservative, we have 
    $x\otimes z \leq y \leq y\otimes z,$
    which implies $x\leq y$ by cancellativity. Hence, the set $\{z\in \V \mid x\otimes z \leq y\}$ is either empty, giving $[x,y] = \bot$ or contains only $\1$ if $x=y$. The first case implies that the defining formula $z\otimes x \leq y \iff z\leq [x,y] = \bot$ is only valid for $z = \bot$, which contradicts the non-triviality of $\V$. The second case also implies that all points in $\V$ are the same, contradicting non-triviality.
\end{proof}

We can also prove that a complete epistemic calculus satisfies \textbf{E4} if and only if it satisfies \textbf{E7}. 

\begin{theorem}
    A complete epistemic calculus $\V$ satisfies epistemic fallibility if and only if it is closed. 
\end{theorem}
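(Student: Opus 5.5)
The plan is to work throughout with the characterization already used in the proof of \cref{thm:no-go1}. In a complete epistemic calculus $\V$, an internal hom, if it exists, must be
\[[x,y] = \bigvee S_{x,y}, \qquad S_{x,y} = \{z\in \V \mid x\otimes z \leq y\}.\]
So closedness amounts to the statement that each $S_{x,y}$ contains its own join. Fallibility (\textbf{E7}) says that each $S_{x,y}$ is non-empty, at least for $x\neq\top$. The two directions therefore compare ``$S_{x,y}$ has a greatest element'' with ``$S_{x,y}$ is non-empty''.

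\emph{Closed $\Rightarrow$ fallible.} Suppose $\V$ is closed. Then $x\otimes -$ is left adjoint to $[x,-]$, so it preserves all joins that exist, and in particular the empty join. Since $\V$ is complete it has a bottom element $\bot=\bigvee\emptyset$, and hence $x\otimes\bot = \bot$ for every $x$. Given $x,y$, the choice $z=\bot$ gives $x\otimes z=\bot\leq y$, so $S_{x,y}\neq\emptyset$. This proves \textbf{E7}, and even the stronger version in which $x=\top$ is allowed.

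\emph{Fallible $\Rightarrow$ closed.} Assume \textbf{E7}. Define $[x,y]:=\bigvee S_{x,y}$, which exists by completeness. The implication $x\otimes z\leq y \Rightarrow z\leq [x,y]$ holds by the definition of a join. For the converse implication it suffices to show $x\otimes[x,y]\leq y$, because monotonicity of $\otimes$ then gives $x\otimes z\leq x\otimes[x,y]\leq y$ whenever $z\leq[x,y]$. The required inequality would follow from
\[x\otimes\bigvee S_{x,y} = \bigvee_{z\in S_{x,y}} x\otimes z \leq y,\]
that is, from $x\otimes -$ preserving the join of $S_{x,y}$. If $x=\top$ and $S_{\top,y}$ is empty, \textbf{E7} says nothing. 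In that case I would argue separately, using $\top\otimes\bot\leq\top\otimes z$ for all $z$, or would restrict the claim accordingly. Naturality of $[-,-]$ follows automatically once the adjunction holds pointwise, since everything lives in a poset.

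\emph{Main obstacle.} The hard step is justifying $x\otimes\bigvee S_{x,y}\leq y$. Non-emptiness of $S_{x,y}$, which is all that \textbf{E7} provides, does not by itself force $\otimes$ to distribute over joins. My first attempt would use the structure of $S_{x,y}$: it is a non-empty down-set, and it is closed under finite joins whenever $x\otimes(z\vee z')\leq y$. I would then try to reduce to directed joins and check whether completeness, together with the interpretation of \textbf{E7} as a ``best possible revision'' $z$, gives the bound.

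If that fails, the honest fix is to read \textbf{E7} in the strengthened form ``there is a \emph{largest} $z$ with $x\otimes z\leq y$'', or to add the hypothesis that $\otimes$ preserves joins, which is the usual quantale condition. Under either reading the converse direction is immediate from the argument above.
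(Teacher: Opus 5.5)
Your closed $\Rightarrow$ fallible direction is correct, and it even gives \textbf{E7} at $x=\top$. (The one-line witness is $z=[x,y]$, since $[x,y]\leq[x,y]$ gives $x\otimes[x,y]\leq y$; this does not need completeness.)

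\textbf{The converse cannot be proved as stated.} You have located exactly the step the paper's own proof skips. It asserts that $\bigvee S_{x,y}$ ``is equivalent to $[x,y]$ by the argument in the proof of Theorem~\ref{thm:no-go1}''. That argument presupposes the internal hom exists, and nothing shows $x\otimes\bigvee S_{x,y}\leq y$. Your planned first attempt cannot close this gap, because the implication fallible $\Rightarrow$ closed is false.

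\emph{First counterexample.} Take the five-element lattice $M_3$ with atoms $a,b,c$, with $\otimes=\wedge$ and unit $\top$. It is complete. The choice $z=\bot$ witnesses \textbf{E7} for every $x$, including $\top$. But $S_{a,\bot}=\{\bot,b,c\}$ has join $b\vee c=\top$, while $a\wedge\top=a\not\leq\bot$. So $[a,\bot]$ does not exist. Here $S_{a,\bot}$ is not even closed under binary joins, so no reduction to directed joins can help. The drastic $t$-norm on $[0,1]$ fails the same way: $S_{1/2,0}=[0,1)$.

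\emph{Second counterexample.} The $x\neq\top$ exemption in \textbf{E7} breaks the implication on its own. Take the chain $0<1<2$ with unit $1$, $2$ absorbing, and $\otimes=\min$ on $\{0,1\}$. Every $x\neq 2$ has $x\otimes 0=0$, so \textbf{E7} holds in the paper's sense. Yet $S_{2,0}=\emptyset$, so the calculus is not closed. Hence your ``argue separately for $x=\top$'' cannot work either.

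\textbf{On your repairs.} A change of hypotheses is unavoidable, as you suspected, but both repairs you propose make \textbf{E7} redundant. ``A largest $z$ with $x\otimes z\leq y$'' is literally the definition of closedness. And in a complete lattice, $x\otimes(-)$ preserving all joins already yields the right adjoint $[x,y]=\bigvee S_{x,y}$ without any appeal to \textbf{E7}.

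A sharp version that keeps fallibility meaningful is the following. In a complete $\V$, \textbf{E7} for all $x$ (including $\top$) is equivalent to $x\otimes\bot=\bot$ for all $x$: take $y=\bot$ and use monotonicity. That is, it says $x\otimes(-)$ preserves the empty join. Hence $\V$ is closed if and only if it satisfies this strong \textbf{E7} and $x\otimes(-)$ preserves all non-empty joins. Your closed $\Rightarrow$ fallible argument via $x\otimes\bot=\bot$ is precisely the forward half of this.
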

\begin{proof}
    If we assume \textbf{E7}, we have that the set 
    \[\{z \mid x\otimes z \leq y\}\] 
    is non-empty. By completeness the least upper bound exists, and is equivalent to $[x,y]$ by the argument in the proof of \cref{thm:no-go1}. Conversely, if $\V$ is closed, then again the above set is non-empty, implying \textbf{E7}. 
\end{proof}

In Zadeh's fuzzy logic, see \cite{zadeh_1965} or \cite{klir-yuan_1995}, there is a standard result stating that the only idempotent $t$-norm is the Gödel $t$-norm (the $\min$ operator) -- see for example \cite[3.3]{dudek-jun_1999}. We can also prove an analogue of this result in our setting, making it even more structural. 

\begin{theorem}
    Let $\V$ be an epistemic calculus on a total order structure with $\1 = \top$. If the monoidal structure on $\V$ is idempotent, then $p\otimes q = \min(p, q)$. 
\end{theorem}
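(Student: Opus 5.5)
The plan is the standard sandwich argument for idempotent $t$-norms, using only monotonicity of $\otimes$ (the symmetric monoidal poset condition), the unit law, $\1 = \top$, and totality of the order.

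\textbf{Step 1: upper bound.} Since $\1 = \top$, every $q$ satisfies $q \leq \1$. Monotonicity of $\otimes$ in the second variable and the unit law then give
\[p \otimes q \leq p \otimes \1 = p.\]
Symmetrically, $p \otimes q \leq \1 \otimes q = q$. Hence $p\otimes q$ is a lower bound of $\{p,q\}$. Because the order is total, this means $p \otimes q \leq \min(p,q)$. Note that this step does not yet use idempotency.

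\textbf{Step 2: lower bound.} By totality we may assume without loss of generality that $p \leq q$, so $\min(p,q) = p$; symmetry of $\otimes$ handles the other case. Monotonicity in the second variable and idempotency \textbf{E6} give
\[p = p \otimes p \leq p \otimes q.\]
Combining this with Step 1 yields $p \leq p\otimes q \leq p$.

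\textbf{Step 3: from two inequalities to equality.} Antisymmetry gives $p\otimes q = p = \min(p,q)$, provided $\V$ is a genuine poset. The paper also speaks of "preorder categories." If $\V$ is only a preorder, I would state the conclusion up to isomorphism, i.e.\ $p \otimes q \cong \min(p,q)$, which is the natural categorical reading. I would also remark that totality is what makes $\min$ well defined, and that in general the same argument shows $p \otimes q$ is the meet $p \wedge q$ whenever that meet exists.

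None of these steps is a real obstacle; the argument is short. The point needing the most care is Step 3, i.e.\ making explicit that the equality relies on antisymmetry, or else is only up to isomorphism. Step 1 also needs care: it is where the hypothesis $\1 = \top$ is essential, since without it $\CF$-like products can exceed their arguments.
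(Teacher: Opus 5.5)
Your proof is correct and is essentially the paper's argument. The paper also assumes $x \leq y$, gets $x = x\otimes x \leq x\otimes y$ from idempotency and monotonicity, gets $x\otimes y \leq x\otimes \1 = x$ from $y \leq \1 = \top$, and concludes equality; antisymmetry is implicit there because epistemic calculi are defined as posetal. Your closing remark can be strengthened: any lower bound $r$ of $p$ and $q$ satisfies $r = r\otimes r \leq p\otimes q$, so $p\otimes q$ is always the meet of $p$ and $q$, and no assumption that the meet exists is needed.
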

\begin{proof}
    By totality of the order assume without loss of generality that $x\leq y$. By monotonicity and idempotency of $\otimes$, we have 
    \[x = x\otimes x \leq x\otimes y \leq y\otimes y = y.\]
    Hence, $x=\min(x, y)\leq x\otimes y$. As $y\leq \1$ we further have 
    \[x\otimes y \leq x\otimes \1 = x,\]
    hence $x\otimes y = \min(x,y)$.     
\end{proof}

\begin{corollary}
    The category $\PT$ is the unique idempotent epistemic calculus on $[0,1]$ up to order-preserving isomorphism. 
\end{corollary}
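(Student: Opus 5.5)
The plan is to reduce the corollary to the preceding theorem by transporting structure along an order isomorphism. First I will fix the reading of the statement. It concerns an idempotent epistemic calculus $(\V,\leq,\otimes,\1)$ whose underlying poset is $[0,1]$ with its standard order, or more generally is order-isomorphic to it, and whose unit is the top element, $\1=\top$. This is the standing hypothesis of the preceding theorem. I will point out that it is genuinely needed: the calculus $([0,1],\leq,\max,0)$ from the remark after \textbf{E5} is idempotent. It is not isomorphic to $\PT$ by any order-preserving map, because an increasing bijection of $[0,1]$ fixes $0$ and $1$, so it cannot carry the unit $0$ to the unit $1$ of $\PT$. So the corollary is to be read under the hypotheses of the theorem, with the "total order" specialised to $[0,1]$.

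The key steps are as follows.

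\textbf{Step 1 (transport).} Let $f\colon \V\to [0,1]$ be an order isomorphism. Define $p\otimes' q = f(f^{-1}(p)\otimes f^{-1}(q))$ on $[0,1]$. Since $f$ and $f^{-1}$ are monotone, $([0,1],\leq,\otimes',f(\1))$ is again a symmetric monoidal poset. It is idempotent because $\otimes$ is. Finally $f(\1)=f(\top)=1$, because an order isomorphism preserves top elements.

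\textbf{Step 2 (apply the theorem).} The order on $[0,1]$ is total and the unit $f(\1)=1$ is its top. The preceding theorem therefore gives $p\otimes' q=\min(p,q)$, so $([0,1],\leq,\otimes',1)$ is exactly $\PT$.

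\textbf{Step 3 (conclude).} By construction, $f$ is then a strict symmetric monoidal order isomorphism $\V\to\PT$. Equivalently, since an order-preserving bijection between total orders satisfies $f(\min(x,y))=\min(f(x),f(y))$, the product on $\V$ is $\min$ and $f$ preserves it. Conversely, $\PT$ itself is idempotent with unit the top, since $\min(x,x)=x$. So $\PT$ is the unique such calculus up to order-preserving isomorphism.

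The main obstacle is not computational but definitional: making precise which hypotheses the corollary inherits. The unit-equals-top condition must be made explicit, or the claim is false by the $\max$ example. Once that is fixed, the only point to check is that the transport in Step 1 preserves the unit-is-top condition. This holds because order isomorphisms preserve top elements.
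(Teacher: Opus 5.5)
Your proposal is correct and follows the paper's route. The paper gives no separate proof: the corollary is an immediate consequence of the preceding theorem (total order, $\1=\top$, idempotent $\Rightarrow$ $\otimes=\min$), and you were right to make explicit the $\1=\top$ hypothesis that the paper's statement leaves implicit, since $([0,1],\leq,\max,0)$ is idempotent but not isomorphic to $\PT$. The transport in Step 1 is harmless but unnecessary, because $\V$ is itself totally ordered with its unit as top element, so the theorem applies to $\V$ directly.
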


We summarize this section with an overview of the properties of the examples presented. Note that all examples also have non-complete versions by invoking Cromwell's rule, and the list is easily extendable to these as well. 

\begin{table}[h]
\begin{tabular}{l | llllllll}
     & \textbf{E1} & \textbf{E2} & \textbf{E3} & \textbf{E4} & \textbf{E5} & \textbf{E6} & \textbf{E7} & \textbf{E8} \\
     \hline
$\CF$  &\checkmark  &\checkmark &\checkmark &\checkmark & & &\checkmark &\checkmark \\
$\PT$  &\checkmark  &\checkmark &\checkmark &\checkmark & &\checkmark &\checkmark & \\
$\PTb$ &\checkmark  &\checkmark &\checkmark & & &\checkmark &\checkmark & \\
$\IP$  &\checkmark  &\checkmark &\checkmark & & &\checkmark &\checkmark &
\end{tabular}
\end{table}

Enticingly, $\PTb$ and $\IP$ satisfy all of the same axioms. This is a good indication for strong relationships between these frameworks, and we will see shortly that these respective pairs are in fact isomorphic.

\section{Change of calculi}
Usually one is limited by using a single framework for measuring and quantifying uncertainty. One of the reasons for this work was to build a conceptual foundation for how one could \emph{change} ones framework during or after measurements are being made. It is perhaps easy to think that this is simply done by applying some translation function on values, but this makes it highly opaque how this affects the whole structure of uncertainty measurements. The language used above allows us to understand how to make sure everything works out nicely, and allows us to further understand the consequences of the changes.

If $\V$ and $\W$ are epistemic calculi, then a \emph{conservative change of calculi} is a lax symmetric monoidal functor $F\: \V \to \W$. In particular, $F$ satisfies 
\begin{itemize}
    \item $x\leq_\V y$, then $F(x)\leq_\W F(y)$;
    \item $F(x)\otimes_\W F(y) \leq_\W F(x\otimes_\V y)$;
    \item $\1_\W \leq_\W F(\1_\V).$
\end{itemize}  
Similarly, a \emph{liberal change of calculi} is an op-lax monoidal functor $F\: \V \to \W$, meaning it satisfies 
\begin{itemize}
    \item $x\leq_\V y$, then $F(x)\leq_\W F(y)$;
    \item $F(x\otimes_\V y) \leq_\W F(x)\otimes_\W F(y)$;
    \item $F(\1_\V) \leq_\W \1_\W.$
\end{itemize}  
The lax symmetric monoidality means that fusion of information along a conservative change cannot produce \emph{more} certainty than was already present in the original calculus. Conversely, fusion of information along a liberal change cannot produce \emph{less} certainty than was already present in the original calculus. If a functor is both conservative and liberal, we say it is \emph{balanced}. A balanced change then satisfies 
\begin{itemize}
    \item $F(x)\otimes_\W F(y) = F(x\otimes_\V y)$;
    \item $\1_\W = F(\1_\V),$
\end{itemize}  
in other words, it is a symmetric monoidal functor.



Let us prove the statement we made earlier, about the equivalence between $\PTb$ and $\IP$. 

\begin{lemma}
    The assignment 
    \begin{align*}
        F \: \PTb &\to \IP \\
        (x,x') &\longmapsto [x,x']
    \end{align*}
    is a balanced change of calculi. In particular, with the intersection monoidal structure on $\IP$, the map $F$ is an equivalence. 
\end{lemma}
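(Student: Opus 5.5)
The plan is to show that $F$ is an isomorphism of symmetric monoidal posets. That is, $F$ is a bijection on objects, an order-isomorphism, and it preserves $\otimes$ and $\1$ strictly. Such a map is both lax and op-lax, hence balanced, and its inverse is again a balanced change of calculi, which gives the equivalence.

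The first step is to check that $F$ is well defined and bijective on objects. An object of $\PTb$ is a pair $(x,x')\in I\times I$ with $x\leq x'$, and that condition says exactly that $[x,x']$ is a non-empty closed sub-interval of $I$. Conversely, every non-empty closed sub-interval of $I$ has the form $[x,x']$ for a unique such pair, since its endpoints are determined. So $F$ is a bijection, with inverse $G([x,x'])=(x,x')$.

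Next I compare the orders. By definition, $(x,x')\leq(y,y')$ in $\PTb$ means $x\geq y$ and $x'\leq y'$. For non-empty closed intervals this is equivalent to $[x,x']\subseteq[y,y']$: the forward direction is immediate, and for the converse the endpoints $x,x'$ lie in $[y,y']$. Hence both $F$ and $G$ are monotone, and $F$ is an isomorphism of posets. For the monoidal structure I use the elementary identity
\[[x,x']\cap[y,y']=[\max(x,y),\min(x',y')],\]
which gives $F((x,x')\otimes(y,y'))=F(x,x')\otimes_\IP F(y,y')$ with equality. The units also match, since $F(0,1)=[0,1]=\1_\IP$. Equality in both places means all the lax and op-lax inequalities hold, so $F$ is balanced, and the same computation read backwards shows that $G$ is balanced too.

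The step I expect to need care is the partiality of the tensor. If $\max(x,y)>\min(x',y')$, then the pair $(\max(x,y),\min(x',y'))$ is not an object of $\PTb$, and the intersection is empty, so it is not an object of $\IP$ either. The point to make is that the two sides fail in exactly the same cases, since $[a,b]$ is empty iff $a>b$. Whatever convention makes the structures total therefore transports along $F$. For instance, one can adjoin a bottom "contradiction" element $\bot$ to both, with $\bot\mapsto\emptyset$ and $\bot$ absorbing for $\otimes$. With such a convention $F$ still preserves order and tensor strictly, and the argument above goes through unchanged. Apart from this, everything reduces to the routine interval computations above.
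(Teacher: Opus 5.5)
Your proof is correct and follows the same route as the paper: bijection on objects, monotonicity, and strict preservation of the tensor via $[x,x']\cap[y,y']=[\max(x,y),\min(x',y')]$. You are more careful than the paper. It omits the order-reflection and unit checks, and it silently ignores that both tensors are only partially defined when $\max(x,y)>\min(x',y')$; your adjoined-bottom convention handles that case correctly.
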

\begin{proof}
    The map is easily seen to be an equivalence on objects. It is also monotone on the monoidal structures. Finally, for two objects $(x, x')$ and $(y, y')$ we have 
    \begin{align*}
        F((x, x')\otimes (y, y')) 
        &= F(\max(x, y), \min(x',y')) \\
        &= [\max(x, y), \min(x',y')] \\
        &= [x, x']\cap [y, y] \\
        &= F(x, x')\otimes F(y, y'),
    \end{align*}
    which proves it is balanced. 
\end{proof}

\begin{remark}
    Note that while the above map is an equivalence, the use and semantics are often different. The degree of rejection is here mapped to the lower bound for the probability, which intuitively seems out of place. This shows that the underlying calculi might be equivalent, even though our interpretation of them are different. We are only concerned with this type of syntactic equivalence in the present work, but hope to expand this notion further in the future to also include interpretation into the structure. 
\end{remark}

We also have a natural map $F\: \PT \to \CF$ given on objects by $F(x) = 2x-1$. As this map is monotone, it also extends to a functor. Notice that we have $F(1) = 1 \geq 0$. Also, assuming without loss of generality that $x \leq y$, then 
\[F(x\otimes_\PT y) = F(x) \leq F(x)\otimes_\CF F(y)\]
as you cannot create lower epistemic values in $\CF$ by fusing with something of higher value. Hence $F$ is an op-lax monoidal functor, or in other words a liberal change of calculi. 

This map feels a bit odd, as the monoidal unit $1$ is mapped to the top value $\top$, and since it is a rescaling, $\frac{1}{2}$ is mapped to $0$. This is ok formally, but perhaps not a good map to compare epistemic frameworks. The problem is that our version of possibility cannot incorporate negative evidence, and needs semantics to have a notion of necessity, which is why we added bipolar possibility theory. Let us see how this also fixes the above problematic map. 

Recall that a point $(x,y) \in \PTb$ is interpreted as $x$ being the degree of rejection, while $y$ is the degree of possibility. We define 
\begin{align*}
    F \: \PTb &\to \CF \\ 
    (x,y) &\longmapsto y-(1-x)
\end{align*}
which measures possibility minus non-rejectedness, resulting in a kind of single value belief. In particular, $F(0,1)=0$, $F(1,1)=1$ and $F(0,0) = -1$. One can check that this gives a liberal change of calculi. 

\begin{remark}
    Notice that the ``inconsistent'' state $(1,1)$, interpreted as both fully possible and fully impossible due to rejection, is sent to the maximal belief state. This is also not very intuitive. One way to fix this is to interpret the pair $(x,y)$ as $x$ being non-rejectedness, and $y$ being possibility, and changing the monoidal structure to $\min$-$\min$, instead of $\max$-$\min$. Then one can define a conservative change $F(x,y)=x+y-2$, which has a more suitable semantic interpretation. 
\end{remark}

\subsection{Enriched category theory}
One way to add epistemic uncertainty quantification directly into a process is by utilizing \emph{enriched categories}. This is a theory that endows a category with extra structure, coming from another category. Our approach is \emph{relational} and not pointwise. This follows the dictums of category theory, where the relational structure and how an object is situated in its context, is more important than the object itself. 

Let $(\V,\otimes_\V, \1_\V)$ be a symmetric monoidal category. A $\V$\emph{-enriched category} $\C$ is a collection of objects $\Ob(\C)$, together with an object $\V(x,y) \in \V$ for any pair of objects $x, y \in \C$, a morphism $\1_\V \to \V(x,x)$ called the identity, and a morphism 
\[\V(y,z)\otimes_\V \V(x,y) \to \V(x,z)\]
for any three objects $x, y, z$ called the composition. These must also satisfy natural associativity and unitality conditions. Similarly, there is a notion of a $\V$-enriched functor, which assigns objects to objects and morphism $\V$-objects to morphism $\V$-objects, see the standard reference \cite{kelly_1982} for details. The category of $\V$-enriched categories and $\V$-enriched functors is denoted $\Cat_\V$. 

The central idea for adding epistemic uncertainty to a category $\C$ is to enrich it in an epistemic calculus. For example, define $\H$ to be a category consisting of hypotheses $H$, and morphisms $H\to H'$ being belief updates. We can make $\H$ into an enriched category over any uncertainty framework $\C$ in order to have a quantitative measure of how much we believe hypothesis $H'$ based on $H$. In particular this means assigning to each belief update $f\: H\to H'$ an epistemic value $\C(f)$, such that $\C(\id_H) = \1_\C$ for any $H$ --  no hypothesis can justify its own belief -- and such that composition respects the monoidal structure in $\C$, i.e. 
\[\C(f'\circ f) = \C(f)\otimes_\C \C(f')\]
for any $H\to H'\to H''$. In \cite{opdan_2025}, Opdan uses these ideas to model sensor systems for submarine detection, and uses $\CF$ as his enriching category. In \cite{bradley-terilla-vlassopoulos_2022} the authors use these ideas to model large language models, using $([0,1], \leq, \cdot, 1)$ as their epistemic calculus. 

Due to the general nature of our setup, we now immediately get a procedure for changing how a category uses an epistemic calculus. 

\begin{theorem}[\cite{eilenberg-kelly_1966}]
    For any conservative change of calculi $F\:\V \to \W$, there is an induced functor 
    \[F^*\:\Cat_\V \to \Cat_\W,\]
    which assigns to a $\V$-enriched category $\C$ the $\W$-category consisting of the same objects, but whose morphism $\W$-objects are determined by the images of the morphism $\C$-objects under $F$. Similarly, a liberal change of calculi induces a functor in the opposite direction. 
\end{theorem}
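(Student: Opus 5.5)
The plan is to build $F^*$ directly on objects and morphisms of $\Cat_\V$, using only the three defining inequalities of a lax monoidal functor between posets. Because $\W$ is posetal, every coherence diagram commutes automatically. The only real content is therefore the \emph{existence} of the structure morphisms, that is, the required inequalities.

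\textbf{Step 1: $F^*\C$ is a $\W$-category.} Given a $\V$-enriched category $\C$, define $F^*\C$ to have $\Ob(F^*\C)=\Ob(\C)$ and hom-values $F^*\C(x,y) := F(\C(x,y))$.
\begin{itemize}
    \item \emph{Identities.} Compose the unit inequality $\1_\W \leq F(\1_\V)$ with $F$ applied to $\1_\V\leq \C(x,x)$, which is valid by monotonicity of $F$.
    \item \emph{Composition.} Chain the lax structure map with monotonicity:
    \[F(\C(y,z))\otimes_\W F(\C(x,y)) \leq F(\C(y,z)\otimes_\V \C(x,y)) \leq F(\C(x,z)).\]
    \item \emph{Associativity and unitality.} These are equalities of morphisms in the posetal category $\W$, so they hold trivially. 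In the general, non-posetal case of \cite{eilenberg-kelly_1966}, one would instead use the coherence axioms of a lax monoidal functor; I will remark on this but not need it.
\end{itemize}

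\textbf{Step 2: $F^*$ on $\V$-functors.} For a $\V$-functor $G\colon \C\to\D$, let $F^*G$ act as $G$ on objects. The required inequality $F(\C(x,y))\leq F(\D(Gx,Gy))$ is $F$ applied to the structure inequality $\C(x,y)\leq \D(Gx,Gy)$, again by monotonicity. Preservation of identities and composition in $\Cat_\V$ holds on the nose, since $F^*$ does not change the underlying object assignments. This makes $F^*$ a functor.

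\textbf{Step 3: the liberal case.} This is the step I expect to need care, because it is phrased as ``a functor in the opposite direction'' and the naive analogue fails. An op-lax $F$ gives $F(a\otimes b)\leq F(a)\otimes F(b)$, which is the wrong direction for composition. My first approach is to pass to opposite posets. If $F$ is op-lax $\V\to\W$, then $F^{\mathrm{op}}\colon \V^{\mathrm{op}}\to\W^{\mathrm{op}}$ is lax monoidal, since reversing the order reverses both structure inequalities. Applying Steps 1--2 then yields $\Cat_{\V^{\mathrm{op}}}\to\Cat_{\W^{\mathrm{op}}}$.

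Alternatively, if $F$ has a right adjoint $R$ (for instance when $\V$ is complete and $F$ preserves joins), doctrinal adjunction makes $R$ lax monoidal $\W\to\V$. Then $R^*\colon\Cat_\W\to\Cat_\V$ is the functor in the opposite direction. I would present this adjoint version as the intended reading, assuming the needed completeness, and state the op-version as the unconditional one.
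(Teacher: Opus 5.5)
The paper gives no proof of this statement; it only cites Eilenberg--Kelly, so the relevant comparison is with their change-of-base construction. Your Steps 1--2 are exactly that construction specialized to posetal $\V,\W$. There, the associativity and unit axioms for $F^*\C$, which Eilenberg--Kelly must derive from the coherence axioms of the lax structure, become vacuous. Your argument is therefore self-contained and reduces to the two displayed inequalities. What their version buys is generality: non-posetal bases, and $F^*$ as a $2$-functor (in your setting the latter is the one-line check $\1_\W\le F(\1_\V)\le F(\D(Gx,G'x))$).

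For the liberal clause, neither the paper nor Eilenberg--Kelly supplies an argument, and you are right that transporting homs along an op-lax $F$ fails. Be precise, though, about what your two repairs actually give. The $F^{\mathrm{op}}$ version yields $\Cat_{\V^{\mathrm{op}}}\to\Cat_{\W^{\mathrm{op}}}$, i.e.\ it acts on ``co-enriched'' structures with $\C(x,x)\le\1$ and $\C(x,z)\le\C(y,z)\otimes\C(x,y)$. It is unconditional, but it is not a functor between $\Cat_\V$ and $\Cat_\W$ in either direction. Only the adjoint version produces $\Cat_\W\to\Cat_\V$, and some extra hypothesis of that kind is genuinely needed. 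A construction of the stated shape (same objects, transported homs) requires a lax map $\W\to\V$, and op-laxness of $F$ alone does not provide one.

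In the posetal case you can replace the appeal to doctrinal adjunction by a direct check: $F(Rb\otimes Rb')\le FRb\otimes FRb'\le b\otimes b'$, and $F(\1_\V)\le\1_\W\iff\1_\V\le R(\1_\W)$. The same computation shows more. If $\V$ is complete and closed, then $R(b)=\bigvee\{a\mid F(a)\le b\}$ is lax monoidal for every op-lax $F$, even when $F$ does not preserve joins, because $\otimes$ then distributes over the joins defining $R$.
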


This means that for any category $\C$ enriched in some epistemic calculus $\V$, it is possible to change framework in a very natural way. The semantics stay the same, but the syntax changes. 

\begin{remark}
    It would be interesting to use this to study LLMs via the enriched syntax and semantic categories from \cite{bradley-terilla-vlassopoulos_2022}, and use alternative uncertainty calculi. As discussed in \cite{opdan_2025}, one could then possibly avoid issues with posterior collapse. 
\end{remark}

\subsection{Enriched epistemic updating}
\label{sses:updating}
One piece of the puzzle that has been missing from this story so far is how Bayesian updating, which is perhaps the standard way of encoding and updating uncertainties, fits into this framework. 

First, notice that Bayes rule is not symmetric monoidal, hence it does not simply define an epistemic calculus. This also makes sense intuitively. The underlying mathematical logic of Bayes rule is not Bayes rule itself, but \emph{multiplication} and \emph{division}. Bayes rule, and other updating mechanisms for beliefs, lie one level above the mathematical logic. Let us describe this, and derive Bayes rule from a general categorical process. 

\begin{remark}
    Note that there are other categorical theories for Bayesian updating, like Markov categories, see \cite{fritz_2020} and \cite{cho-jacobs_2019}. 
\end{remark}

Let $\V$ be a closed epistemic calculus, with monoidal structure $-\otimes -$ and internal hom $[-,-]$. Let furthermore $\H$ be a category of hypotheses, that is enriched in $\V$. The interpretation is again that for two hypotheses $H$ and $H'$, the $\V$-object $\H(H,H')$ describes our belief in $H'$ relative to $H$. 

We want to understand what happens when we incorporate a \emph{new} piece of evidence. One can incorporate the notion of ``new'' in several different ways, but for the moment we just let $E\in \H$ . One can also think about $E$ coming from some evidence category via a $\V$-functor into $\H$.  

Now, given two hypotheses $H$ and $H'$, and some evidence $E$, we define the $\V$-\emph{update} of $\H$ based on $E$, to be the same category $\H$, but with enriched structure given by 
\[\H_E(H,H') := [\H(H,H')\otimes \H(H',E), \H(H,E)].\]
This is in fact an enrichment, as can be seen by currying and internal hom composition. The interpretation is that $\H_E(H,H')$ measures the compatability of $\H(H,H')$ with the evidence $E$. 

The general shape of this idea is that ``updated comparisons = evidence - (prior comparison + evidence explained by the comparison)'', at least when interpreted in the correct coordinates. We can recognize Bayes formula to be of this shape, a statement we can make formal as follows. 

\begin{theorem}
    \label{thm:bayes-rule}
    Let $\V$ be the epistemic calculus defined by the standard preorder on $(0,\infty)$, representing likelihood ratios. We equip it with a monoidal structure $\otimes$ given by multiplication, which makes the internal hom division. If $\H$ is a $\V$-category of hypotheses, with $\V$-structure given by $\H(H, H')=\frac{p(H')}{p(H)}$, then $\V$-updating is non-normalized Bayesian updating. 
\end{theorem}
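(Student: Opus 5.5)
We simply unwind the definition of $\V$-updating in this calculus and compare it with Bayes' rule in odds form. The internal hom of $((0,\infty),\leq,\cdot,1)$ is division, $[a,b] = b/a$, because $a\cdot z \leq b \iff z \leq b/a$ for positive reals. Substituting the given enrichment $\H(H,H') = p(H')/p(H)$ into the definition of $\H_E$ gives
\[
\H_E(H,H') = \frac{\H(H,E)}{\H(H,H')\cdot \H(H',E)} = \frac{p(E)/p(H)}{\frac{p(H')}{p(H)}\cdot\frac{p(E)}{p(H')}}.
\]
The plan is to carry out this substitution explicitly and then read off the result in the right coordinates.

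The main obstacle is interpretive rather than computational. Taken literally, the displayed expression telescopes to $1$. So the intended reading must be one in which $\H(H',E)$ plays the role of a likelihood $p(E\mid H')$ (the ``evidence explained by'' $H'$), and $\H(H,E)$ plays the role of $p(E\mid H)$. Only then does the formula produce something non-trivial.

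I would therefore fix the following convention. The hom-object from a hypothesis to the evidence object $E$ records the likelihood of $E$ under that hypothesis, which is the sense in which $E$ ``comes from an evidence category via a $\V$-functor''. The hom-objects between hypotheses record prior odds $p(H')/p(H)$. Under this convention,
\[
\H_E(H,H') = \frac{p(E\mid H)}{\frac{p(H')}{p(H)}\, p(E\mid H')},
\]
which is the reciprocal of the posterior odds $\frac{p(H'\mid E)}{p(H\mid E)} = \frac{p(H')}{p(H)}\cdot\frac{p(E\mid H')}{p(E\mid H)}$. Equivalently, it is the posterior odds of $H$ against $H'$. I would make the direction convention explicit and check that it matches the paper's reading of $\H(H,H')$ as ``belief in $H'$ relative to $H$''. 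If the directions disagree, I would adjust by symmetry of $\otimes$ or by swapping the roles of $H$ and $H'$.

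The final step is to explain in what sense this is \emph{non-normalized} Bayesian updating. In the ratio, the marginal $p(E)$ cancels, so the posterior odds are obtained without ever computing the normalizing constant. This is exactly the odds form $\text{posterior odds} = \text{prior odds}\times\text{Bayes factor}$.

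I would also verify that $\H_E$ is again a $\V$-category, which the paper asserts via currying. In odds coordinates this is easy: the identity gives $\H_E(H,H) = 1$, and composites multiply as ratios because the odds telescope, $\frac{p(H''\mid E)}{p(H\mid E)} = \frac{p(H'\mid E)}{p(H\mid E)}\cdot\frac{p(H''\mid E)}{p(H'\mid E)}$. This confirms that the construction lands in $\Cat_\V$.
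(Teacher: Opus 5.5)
Your proposal follows the paper's proof: the paper also just sets $\H(H,E)=p(E\mid H)$, replacing the value $p(E)/p(H)$ that the stated enrichment would give. This is exactly the telescoping-to-$1$ problem you noticed, and the paper does not mention it. It then computes $\H_E(H,H')=\frac{p(H)\,p(E\mid H)}{p(H')\,p(E\mid H')}=\frac{p(H\mid E)}{p(H'\mid E)}$ and calls this the posterior odds, which agrees with your reading as the odds of $H$ against $H'$, so no direction adjustment is needed. One caution applies to both you and the paper: with this convention $\H$ itself is generally not a $\V$-category, because the composition axiom $\H(H,H')\cdot\H(H',E)\le\H(H,E)$ reduces to $p(H'\cap E)\le p(H\cap E)$ for all $H,H'$, which would force every posterior odds to equal $1$; your $\Cat_\V$ check covers only $\H_E$, not this input category.
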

\begin{proof}
    Let $E$ be an object of $\H$, and let $\H(H, E) = p(E\mid H)$. By definition we have
    \begin{align*}
        \H_E(H,H') 
        &= \frac{p(E\mid H)}{\frac{p(H')}{p(H)}\cdot p(E\mid H')} \\
        &= \frac{p(H) \cdot p(E\mid H)}{p(H')\cdot p(E\mid H')} \\
        &= \frac{p(H\mid E)}{p(H'\mid E)}
    \end{align*}
    which is precisely the posterior odds. 
\end{proof}

One can also normalize the above formula, but this requires chosing a reference object. If $\V$ is complete, then it is natural to use $\top \in \V$. Doing this gives a categorical construction of absolute probabilities. 

Let us also look at the $\V$-updating in some other examples. For $\V = \PT$ we recover a relational version of Dubios--Prade's possibilistic conditioning, where $\H_E(H,H')$ is given by
\[
\begin{cases}
    1, &\min(\H(H,H'), \H(H',E)) \leq \H(H,E) \\
    \H(H,E),  &\min(H,H'), \H(H',E) > \H(H,E).
\end{cases}
\]
The standard absolute possibilistic conditioning, see \cite{dubois-prade_1990}, can be recovered from the relational one, by defining $p(H) = \H(R,H)$ for some reference hypothesis $R$. 

For certainty factors we can convert the general form of the $\CF$-update by using evidence-coordinates, i.e., applying $\tanh^{-1}$ to the expression, which gives: 
\[e_E(H,H') = e(H,E) - (e(H,H')+e(H',E)),\]
where $e(H,H')= \tanh^{-1}\H(H,H')$ is the evidence. To get back the certainty factor we simply compute 
\[\H_E(H,H')= \tanh(e_E(H,H')).\]
This means that $\CF$-updating is similar to log-odds Bayesian updating, but with a bounded non-linear back-projection to $[0,1]$. We plan to study this general $\V$-updating further in future work. 

\section{Conclusion}
In this paper we have introduced a general category theoretic foundation for the syntax of epistemic uncertainty, which cements uncertainty quantification into a common framework on a higher level than what is usually done. Interpreting known examples in this framework shows that it is versatile, and allows for a new perspective on existing examples. 

Our approach allowed us to naturally construct mathematical formalizations of epistemic philosophical positions and concepts, and to have a natural way to compare them. We also described which philosophical positions and concepts hold for each of these examples. 

We then introduced a structured approach for comparing epistemic calculi, together with an understanding of how such changes affects the logic and epistemic quality. This led to a formal method for assigning epistemic uncertainty to a given structure -- in the form of a general category. As category theory is very general, and allows for the modeling of a lot of structures, this gives a robust and mathematically sound framework for discussing the uncertainties assigned to it, and how it combines along features of the structure. This method also allowed us to construct a mathematically rigorous process for changing the epistemic calculus assigned to a system. 

Finally we introduced generalizations of belief updating based on general epistemic calculi, and proved that Bayesian updating and possibilistic conditioning fall out as examples. 

There are bound to be other positions and concepts that can be modeled in this framework, and we hope that this paper can be a starting point to a broader discussion and exploration on categorical modeling of epistemic ideas.

\printbibliography{}  
\end{document}